\newtheorem{thm}{Theorem}[section]
\newtheorem{defn}{Definition}[section]
\newtheorem{prop}{Proposition}[section]
\title
{Lu Qi-Keng's problem for intersection of two complex ellipsoids}
\author{\normalsize Tomasz Beberok \\
\small Faculty of Mathematics and Computer Science, Jagiellonian University,\\
\small Lojasiewicza 6, 30-048 Krakow, Poland \\}
\date{}
\begin{document}

\begin{center}
  \textbf{Lu Qi-Keng's problem for intersection of two complex ellipsoids}
\end{center}
\vskip1em
\begin{center}
  Tomasz Beberok
\end{center}

\vskip2em

In this paper We investigate the Lu Qi-Keng problem for intersection of two complex ellipsoids $\{z \in \mathbb{C}^3 \colon |z_1|^2 + |z_2|^q < 1, \quad |z_1|^2 + |z_3|^r < 1\}$.
\vskip1em

\textbf{Keyword:} Lu Qi-Keng problem, Bergman kernel, Routh-Hurwitz theorem
\vskip1em
\textbf{AMS Subject Classifications:} 32A25;  33D70

\section{Introduction}

In 1921, S. Bergman introduced a kernel function, which is now known as the Bergman kernel function. It is well known that there exists a unique Bergman kernel function for each bounded domain in $\mathbb{C}^n$. Computation of the Bergman kernel function by explicit formulas is an important research direction in several
complex variables. Let $D$ be a bounded domain in $\mathbb{C}^n$. The Bergman space $L^2_a(D)$ is the space of all square integrable holomorphic functions on $D$. Then the Bergman kernel $K_D(z,w)$ is defined \cite{BE} by
\begin{align*}
K_D(z,w)= \sum_{j=0}^{\infty} \phi_j(z) \overline{\phi_j(w)}, \quad (z,w) \in D \times D,
\end{align*}
where $\{\phi_j(\cdot) \colon  j = 0, 1, 2, . . .\}$ is a complete orthonormal basis for $L^2_a(D)$. In \cite{Qi} Lu Qi-Keng indicated that in many concrete examples of bounded domains, $K_{D}(z,\ w)\neq 0$ for all $z,  w\in D$, and considered the open problem whether the above property is generally true. M. Skwarczynski \cite{SKWA} called this problem Lu Qi-Keng conjecture in 1969 and gave the following definition:

\begin{defn}
A domain $D \subset \mathbb{C}^n$ is called a Lu Qi-Keng domain if  $K_D(z,w)\neq 0$ for all $z,w \in D$.
\end{defn}

Obviously, a biholomorphic image of a Lu Qi-Keng domain is a Lu Qi-Keng domain due to the rule of the Bergman kernel
transformation between two biholomorphic equivalent domains. A Cartesian product of two Lu Qi-Keng domains is a Lu
Qi-Keng domain. If $K_D \neq const$ and $D$ is the sum of an increasing sequence of Lu Qi-Keng domains $D_m$, then $D$ is a Lu Qi-Keng domain due to the Ramadanov theorem and Hurwitz theorem.

However, it is not always easy to determine whether or not a given domain is Lu Qi-Keng domain. In 1969, M. Skwarczynski
\cite{SKWA} gave the first example that the Bergman kernel on an annulus in the complex plane $\Omega = \{r < |z| < 1\}$ has zeros
if $0 <r < e^{-2}$. Since then many counterexamples appeared. In 1996, Harold P. Boas \cite{Boas1} proved that the bounded domains of holomorphy in $\mathbb{C}^n$ whose Bergman kernel functions are zero-free form a nowhere dense subset (with respect to a variant of the Hausdorff distance) of all bounded domains of holomorphy. Thus, contrary to many expectations, it is the normal situation for the Bergman kernel function of a domain to have zeroes. For more details, see the survey articles \cite{BL} and \cite{Weiping}.

Among all the known counterexamples, the following complex ellipsoids
$$\Omega_{m,n}^{(p,q)}:=\left\{ (w,z) \in \mathbb{C}^m \times \mathbb{C}^n\colon \|w\|_m^{2p} + \|z\|_n^{2q} <1 \right\}$$
were under consideration frequently, where $\|\cdot\|_m$ is the standard Hermitian norm in complex Euclidean space and $p, q$ are positive real numbers. For instance, Boas, Fu and Straube \cite{BS} proved there exists a strongly convex domain in $\mathbb{C}^n (n > 2)$ which is not Lu Qi-Keng by computing the kernel for $\Omega^{(p,\frac{1}{2})}_{1,1}= \left\{(w, z) \in \mathbb{C}^2 \colon |w|^{2p} + |z| < 1\right\}$ has zeroes if and only if $1/p > 2$. Applying the deflation theorem stated in \cite{BS} we are led quickly to that $\Omega_{m,n} := \{(w, z) \in \mathbb{C}^m \times \mathbb{C}^n \colon  \sum_{k=1}^m |w|^2 + \sum_{k=1}^n |z| < 1\}$ is not Lu Qi-Keng iff $m + 2n > 4$ \cite{JP2}. Using this result, Nguy\^{e}n Vi\^{e}t Anh \cite{Anh} exhibited a strongly convex algebraic complete Reinhardt domain which is not Lu Qi-Keng in $\mathbb{C}^n$ for any $n \geq 3$. \newline
When $m = q = 1$ and $2p \geq 1$ is not an integer, as an application of a theorem due to M. Engli\v{s} \cite{En} and an improvement by B. Chen, Chen [10] proved there exists a constant $n(p)$ depending on p such that for all $n > n(p)$, the domain $\Omega^{p,1}_{1,n}=\{|w|^{2p} +|z_1|^2 + \cdots + |z_n|^2 < 1\}$ is not Lu Qi-Keng. A similar argument as in \cite{BS} immediately shows that $\{|w|^{2p} + |z_1| +\cdots +|z_n| < 1\}$ is not Lu Qi-Keng iff $n \geq [n(p)/2] + 1$, where $[n(p)/2]$ produces the integer part of $n(p)/2$. \newline
In both the above cases, how zeroes of the Bergman kernel depend on the increasing of the dimension of the vector
$z = (z_1, \ldots , z_n)$ are described. The purpose of this paper is to consider intersection of two complex ellipsoids $\{z=(z_1,z_2,z_3) \in \mathbb{C}^3 \colon |z_1|^2 + |z_2|^q < 1, \quad |z_1|^2 + |z_3|^r < 1\}$.

\section{Main results}
The following is the main theorem of this paper.
\begin{thm}[Main Theorem]\label{MT}
For any positive real numbers $q$ and $r$ the domain $$\{z \in \mathbb{C}^3 \colon |z_1|^2 + |z_2|^q < 1, \quad |z_1|^2 + |z_3|^r < 1\}$$ is a Lu Qi-Keng domain.
\end{thm}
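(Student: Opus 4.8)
The plan is to exploit the fact that $D=\{z\in\mathbb C^3:|z_1|^2+|z_2|^q<1,\ |z_1|^2+|z_3|^r<1\}$ is a bounded complete Reinhardt domain containing the origin, so the monomials $z^\alpha$ ($\alpha\in\mathbb Z_{\ge 0}^3$) form a complete orthogonal system of $L^2_a(D)$ and
\begin{align*}
K_D(z,w)=\sum_{\alpha\in\mathbb Z_{\ge 0}^3}\frac{z^\alpha\,\overline{w^\alpha}}{\|z^\alpha\|_{L^2(D)}^2},
\end{align*}
the series converging absolutely on $D\times D$ (by Cauchy--Schwarz, $\sum_\alpha|z^\alpha\overline{w^\alpha}|/\|z^\alpha\|^2\le K_D(z,z)^{1/2}K_D(w,w)^{1/2}$). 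The first step is to compute $\|z^\alpha\|^2$: in polar coordinates the angular variables contribute $(2\pi)^3$, and the substitution $x_1=|z_1|^2$, $x_2=|z_2|^q$, $x_3=|z_3|^r$ turns the radial region into $\{x_j>0,\ x_1+x_2<1,\ x_1+x_3<1\}$, over which the integral splits into two elementary integrals in $x_2,x_3$ followed by a Beta integral in $x_1$. This gives
\begin{align*}
\frac{1}{\|z^\alpha\|^2}=\frac{8(\alpha_2+1)(\alpha_3+1)}{(2\pi)^3}\,\frac{\Gamma(\alpha_1+c+1)}{\Gamma(\alpha_1+1)\,\Gamma(c)},\qquad c=c(\alpha_2,\alpha_3):=\frac{2\alpha_2+2}{q}+\frac{2\alpha_3+2}{r}+1.
\end{align*}

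Since $K_D(z,w)$ depends on $(z,w)$ only through the products $u_j:=z_j\bar w_j$, the next step is to resum the series in closed form. Using $\tfrac{\Gamma(\alpha_1+c+1)}{\Gamma(\alpha_1+1)\Gamma(c)}=c\binom{\alpha_1+c}{\alpha_1}$ together with $\sum_{k\ge 0}\binom{k+c}{k}\zeta^k=(1-\zeta)^{-c-1}$, I would carry out the $\alpha_1$-summation; then, writing $\alpha_2+1=m$, $\alpha_3+1=n$ and using $\sum_{m\ge 1}m\,x^{m-1}=(1-x)^{-2}$, $\sum_{m\ge 1}m^2x^{m-1}=(1+x)(1-x)^{-3}$, I would carry out the remaining two sums (all rearrangements justified by the absolute convergence above with Fubini's theorem, and by $|u_1|=|z_1||w_1|<1$). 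With $\rho:=1-z_1\bar w_1$, $s:=z_2\bar w_2\,\rho^{-2/q}$, $t:=z_3\bar w_3\,\rho^{-2/r}$ this should collapse to
\begin{align*}
K_D(z,w)=\frac{8}{(2\pi)^3}\,\rho^{-2/q-2/r-2}\,(1-s)^{-2}(1-t)^{-2}\Bigl[\,1+\frac{2}{q}\cdot\frac{1+s}{1-s}+\frac{2}{r}\cdot\frac{1+t}{1-t}\,\Bigr].
\end{align*}
Since $|\rho-1|=|z_1\bar w_1|<1$ we have $\rho\ne 0$ and a branch of $\rho^{2/q},\rho^{2/r}$ can be fixed once and for all, so the theorem reduces to showing that the bracketed factor never vanishes and that $1-s\ne 0$, $1-t\ne 0$.

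The decisive estimate is $|s|<1$ and $|t|<1$ for all $z,w\in D$. Indeed $|z_1|^2+|z_2|^q<1$ forces $|z_2|<(1-|z_1|^2)^{1/q}$, similarly for $w$, hence $|z_2\bar w_2|<\bigl[(1-|z_1|^2)(1-|w_1|^2)\bigr]^{1/q}$; combined with the identity $(1-|z_1|^2)(1-|w_1|^2)=|1-z_1\bar w_1|^2-|z_1-w_1|^2\le|1-z_1\bar w_1|^2$ this yields $|z_2\bar w_2|<|1-z_1\bar w_1|^{2/q}$, i.e. $|s|<1$, and likewise $|t|<1$ (note $|1-z_1\bar w_1|\ge 1-|z_1||w_1|>0$). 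Since $|s|<1$ implies $\operatorname{Re}\frac{1+s}{1-s}=\frac{1-|s|^2}{|1-s|^2}>0$ (in particular $s\ne 1$), and the same for $t$, and since $q,r>0$,
\begin{align*}
\operatorname{Re}\Bigl[\,1+\frac{2}{q}\cdot\frac{1+s}{1-s}+\frac{2}{r}\cdot\frac{1+t}{1-t}\,\Bigr]>1>0,
\end{align*}
so the bracket, hence $K_D(z,w)$, is never zero, and $D$ is Lu Qi-Keng. I expect the main technical obstacle to be the bookkeeping of the triple resummation (and the consistent choice of branch for $\rho^{2/q},\rho^{2/r}$); by contrast the positivity argument of the last paragraph, which is the only place the precise shape of $D$ enters, is short and sidesteps any appeal to the Routh--Hurwitz criterion.
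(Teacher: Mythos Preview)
Your argument is correct and, in the final step, genuinely simpler than the paper's. Both routes begin identically: compute $\|z^\alpha\|^2$ and resum to the closed form (your bracketed expression is exactly the paper's numerator $F(\mu_2,\mu_3)=qr(1-\mu_2)(1-\mu_3)+2q(1-\mu_2)(1+\mu_3)+2r(1+\mu_2)(1-\mu_3)$ divided by $qr(1-\mu_2)(1-\mu_3)$). The paper then invokes the automorphism $(z_1,z_2,z_3)\mapsto\bigl(\frac{z_1-a}{1-\bar a z_1},\ldots\bigr)$ to reduce to $z_1=0$, where $|\mu_2|,|\mu_3|<1$ is trivial, and finishes by checking a two--variable Schur--Cohn/\v{S}iljak stability criterion for $F$. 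You instead prove $|\mu_2|,|\mu_3|<1$ directly from the identity $(1-|z_1|^2)(1-|w_1|^2)=|1-z_1\bar w_1|^2-|z_1-w_1|^2$ (so the automorphism never needs to appear explicitly), and then observe that the Cayley map sends $\mathbb D$ to the right half--plane, giving $\operatorname{Re}\bigl[1+\tfrac{2}{q}\tfrac{1+s}{1-s}+\tfrac{2}{r}\tfrac{1+t}{1-t}\bigr]>1$. This one--line positivity argument replaces the entire stability--criterion machinery; it is both shorter and more transparent about \emph{why} the kernel cannot vanish. What the paper's approach buys is a template (conditions (i)--(iii) of its Theorem~4.1) that can be reapplied mechanically to other domains, as it does in Section~5 for $D^1_{r,r}$, whereas your half--plane trick relies on the bracket being an affine combination with positive coefficients of Cayley transforms and would need rethinking for more complicated numerators.
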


\section{Bergman kernel}

For Reinhardt domains it is a standard method for computing the Bergman kernel  to use series representation, since we can choose $\phi_{\alpha} (z) = \frac{z^{\alpha}}{ \|z^{\alpha}\|}.$ Put $\Phi_{\alpha}(\zeta)= z_1^{\alpha_1} z_2^{\alpha_2} z_3^{\alpha_3} $. It is well known, that function $f$ holomorphic in a Reinhardt domain $D \subset \mathbb{C}^n$  has a “global” expansion into a Laurent series $f(z)=\sum_{\alpha \in \mathbb{Z}^n} a_{\alpha} z^{\alpha}$, $z \in D$ (see Proposition 1.7.15 (c) in \cite{JP}). Moreover if $D \cap (\mathbb{C}^{j-1} \times \{0\} \times \mathbb{C}^{n-j} ) \neq \emptyset $, $j=1,\ldots, n$ then $a_{\alpha}=0$ for $\alpha \in \mathbb{Z}^n \setminus \mathbb{Z}^n_{+}$ (see Proposition 1.6.5 (c) in \cite{JP}). Therefore  $\{\Phi_{\alpha} \}$ such that each $\alpha_i \geq 0$ is a complete orthogonal set for $L^2(D_{q,r}^p)$, where $$D_{q,r}^p:= \{z \in \mathbb{C}^3 \colon |z_1|^p + |z_2|^q < 1, \quad |z_1|^p + |z_3|^r < 1\}.$$


\begin{prop}\label{pr1} Let $\alpha_i \in \mathbb{Z}_{+}$ for $i=1,2,3$. Then, we have
$${\left\| z_1^{\alpha_1} z_2^{\alpha_2} z_3^{\alpha_3} \right\|}^2_{L^2(D_{q,r}^2)}= \frac{  \pi^3 \Gamma(\alpha_1 + 1) \Gamma(\frac{2\alpha_2 + 2}{q} +\frac{2\alpha_3 + 2}{r} + 1)  }{  (\alpha_2 + 1) (\alpha_3 +1 ) \Gamma( \frac{2\alpha_2 + 2}{q} +\frac{2\alpha_3 + 2}{r} + \alpha_1 + 2)} $$
\end{prop}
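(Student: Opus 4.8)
The plan is to evaluate the integral $\int_{D_{q,r}^2} |z_1|^{2\alpha_1}|z_2|^{2\alpha_2}|z_3|^{2\alpha_3}\,dV(z)$ directly. First I would pass to polar coordinates in each variable, $z_j = \rho_j e^{i\theta_j}$, so that $dV = \rho_1\rho_2\rho_3\,d\rho_1 d\rho_2 d\rho_3\,d\theta_1 d\theta_2 d\theta_3$. Since the integrand is independent of the angles and $D_{q,r}^2$ is Reinhardt, the angular integrations simply contribute a factor $(2\pi)^3$, leaving the triple real integral of $\rho_1^{2\alpha_1+1}\rho_2^{2\alpha_2+1}\rho_3^{2\alpha_3+1}$ over the region $\{\rho_j>0 : \rho_1^2+\rho_2^q<1,\ \rho_1^2+\rho_3^r<1\}$.

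Next I would substitute $x=\rho_1^2$, $y=\rho_2^q$, $t=\rho_3^r$. This transforms the region into the elementary set $\{x,y,t>0 : x+y<1,\ x+t<1\}$, and after collecting the Jacobian factors $\tfrac12,\tfrac1q,\tfrac1r$ the integrand becomes a constant multiple of $x^{\alpha_1}\,y^{(2\alpha_2+2)/q-1}\,t^{(2\alpha_3+2)/r-1}$. The structural point that makes the computation go through is that for each fixed $x\in(0,1)$ the constraints on $y$ and $t$ decouple — $y\in(0,1-x)$ and $t\in(0,1-x)$ independently — so by Fubini the $y$- and $t$-integrals can be performed first, each producing a power of $(1-x)$ together with the denominators $\tfrac{2\alpha_2+2}{q}$ and $\tfrac{2\alpha_3+2}{r}$.

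What remains is the single Beta integral $\int_0^1 x^{\alpha_1}(1-x)^{a+b}\,dx = B(\alpha_1+1,\,a+b+1)$, where $a=\tfrac{2\alpha_2+2}{q}$ and $b=\tfrac{2\alpha_3+2}{r}$, which I rewrite through $B(u,v)=\Gamma(u)\Gamma(v)/\Gamma(u+v)$ to produce the Gamma factors in the statement. Finally I would tidy up the constants: $(2\pi)^3/2 = 4\pi^3$, and $q\cdot r\cdot a\cdot b = (2\alpha_2+2)(2\alpha_3+2) = 4(\alpha_2+1)(\alpha_3+1)$, so the numerical prefactor collapses to $\pi^3/\bigl((\alpha_2+1)(\alpha_3+1)\bigr)$, giving exactly the claimed identity.

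I do not anticipate a genuine obstacle: the argument is a change of variables plus a Beta-function evaluation. The only points needing a little care are noting that $D_{q,r}^2$ meets each coordinate hyperplane so that the monomials form a complete orthogonal system and the integral is finite (convergence of the Beta integral forces nothing since $\alpha_i\ge 0$ and $q,r>0$), justifying the substitution as a diffeomorphism on the open region, and bookkeeping the constants — but the decoupling of the $y$ and $t$ constraints for fixed $x$ is the observation that keeps the integral elementary.
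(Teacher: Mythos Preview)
Your proposal is correct and is essentially the same argument as the paper's: pass to polar coordinates, integrate out the angles, use the decoupling of the $\rho_2$ and $\rho_3$ ranges for fixed $\rho_1$, and finish with a Beta integral. The only cosmetic difference is that you first substitute $x=\rho_1^2,\ y=\rho_2^q,\ t=\rho_3^r$ before integrating, whereas the paper integrates $r_2,r_3$ directly over $[0,(1-r_1^2)^{1/q}]$, $[0,(1-r_1^2)^{1/r}]$ and then quotes the formula $\int_0^1 x^a(1-x^p)^b\,dx=\Gamma((a+1)/p)\Gamma(b+1)\big/\bigl(p\,\Gamma((a+1)/p+b+1)\bigr)$.
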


\begin{proof}
$$\|  z_1^{\alpha_1} z_2^{\alpha_2} z_3^{\alpha_3}  \|^2_{L^2(D_{q,r}^2)} = \int\limits_{D_{q,r}^2} |z_1|^{2\alpha_1} |z_2|^{2\alpha_2} |z_3|^{2\alpha_3}dV(z)$$
we introduce polar coordinate in each variable by putting $z_1=r_1 e^{i\theta_1}$, $z_2=r_2 e^{i\theta_2}$, $z_3=r_3 e^{i\theta_3}$. After doing so, and integrating out the angular variables we have

$$(2 \pi)^3 \int_0^1 \int_0^{(1-r_1^2)^{1/q}} \int_0^{(1-r_1^2)^{1/r}} r_1^{2\alpha_1 + 1} r_2^{2\alpha_2 + 1} r_3^{2\alpha_3 + 1}\, dr_1 dr_2 dr_3$$
Integrating out of $r_2$ and $r_3$ variables, we obtain

$$ \frac{(2 \pi)^3}{(2\alpha_2 + 2 ) (2\alpha_3 + 2) } \int_0^1  r_1^{2\alpha_1 + 1} (1-r_1^2)^{\frac{2\alpha_2 + 2}{q} +\frac{2\alpha_3 + 2}{r}} \, dr_1$$
After little calculation using well known fact $$\int_0^1 x^a(1-x^p)^b \,dx = \frac{\Gamma((a+1)/p) \Gamma(b+1)}{p  \Gamma((a+1)/p + b + 1) } ,$$  we obtain desired result.
\end{proof}

Now we discuss the Bergman kernel for $D_{q,r}^2$.

\begin{thm}\label{BK}
The Bergman kernel for $$\{z \in \mathbb{C}^3 \colon |z_1|^2 + |z_2|^q < 1, \quad |z_1|^2 + |z_3|^r < 1\}$$ is given by
\begin{align*}
K_{D_{q,r}^2} &((z_1, z_2, z_3),(w_1, w_2, w_3)) =  \\ &\frac{q r (1-\mu_2)(1-\mu_3)+ 2q (1-\mu_2) (1+\mu_3)+2 r (1+\mu_2) (1-\mu_3)}{ \pi^3 q r \left(1-\nu_1^2\right)^{2+2/q+2/r}  (1-\mu_2)^3 (1-\mu_3)^3 } ,
\end{align*}
where $\nu_1= z_1 \overline{w}_1$, $\nu_2= z_2 \overline{w}_2$, $\nu_3= z_3 \overline{w}_3$ and $\mu_2= \frac{\nu_2}{(1-\nu_1)^{2/q}}$, $\mu_3= \frac{\nu_3}{(1-\nu_1)^{2/r}}$.

\end{thm}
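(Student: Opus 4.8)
The plan is to compute the Bergman kernel directly from its series representation $K_{D_{q,r}^2}(z,w) = \sum_{\alpha} \frac{z^\alpha \overline{w}^\alpha}{\|z^\alpha\|^2}$, using the norms obtained in Proposition~\ref{pr1}. Substituting in the formula from Proposition~\ref{pr1}, the kernel becomes
\begin{align*}
K_{D_{q,r}^2}(z,w) = \frac{1}{\pi^3} \sum_{\alpha_1, \alpha_2, \alpha_3 \geq 0} (\alpha_2+1)(\alpha_3+1)\,\frac{\Gamma\!\left(\tfrac{2\alpha_2+2}{q}+\tfrac{2\alpha_3+2}{r}+\alpha_1+2\right)}{\Gamma(\alpha_1+1)\,\Gamma\!\left(\tfrac{2\alpha_2+2}{q}+\tfrac{2\alpha_3+2}{r}+1\right)}\,\nu_1^{\alpha_1}\nu_2^{\alpha_2}\nu_3^{\alpha_3}.
\end{align*}

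The key device is to handle the $\alpha_1$-summation first. Writing $s = \tfrac{2\alpha_2+2}{q}+\tfrac{2\alpha_3+2}{r}+1$, the inner sum over $\alpha_1$ is $\sum_{\alpha_1 \geq 0} \frac{\Gamma(s+\alpha_1+1)}{\Gamma(\alpha_1+1)\Gamma(s)} \nu_1^{\alpha_1} = \sum_{\alpha_1 \geq 0} \binom{s+\alpha_1}{\alpha_1}\,\Gamma(s+1)/\Gamma(s)\cdot\nu_1^{\alpha_1}$; more precisely, using $\Gamma(s+\alpha_1+1)/(\Gamma(\alpha_1+1)\Gamma(s+1)) = \binom{s+\alpha_1}{\alpha_1}$ together with the negative binomial series $\sum_{k\geq 0}\binom{s+k}{k} t^k = (1-t)^{-(s+1)}$, one gets the factor $s\,(1-\nu_1)^{-(s+1)}$ out of the $\alpha_1$-sum (after adjusting the one unit of shift). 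This collapses the triple sum to a double sum over $\alpha_2,\alpha_3$, with $(1-\nu_1)^{-(s+1)}$ where the exponent $s+1$ itself depends linearly on $\alpha_2$ and $\alpha_3$. The crucial bookkeeping is that $(1-\nu_1)^{-(2\alpha_2+2)/q}$ can be absorbed by replacing $\nu_2$ with $\mu_2 = \nu_2/(1-\nu_1)^{2/q}$, and similarly $\nu_3 \mapsto \mu_3$; the leftover $\nu_1$-dependence is exactly the stated prefactor $(1-\nu_1^2)^{-(2+2/q+2/r)}$ once one also accounts for the conjugate-variable normalization (note $1-\nu_1^2$ versus $1-\nu_1$ — careful tracking of which powers come with $\overline{w}_1$ is needed here).

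After this reduction the problem is to evaluate a double sum of the form $\sum_{\alpha_2,\alpha_3\geq 0}(\alpha_2+1)(\alpha_3+1)\,c(\alpha_2,\alpha_3)\,\mu_2^{\alpha_2}\mu_3^{\alpha_3}$ where the coefficient $c$ still contains Gamma functions in $\alpha_2/q + \alpha_3/r$. I would split this off by introducing a Beta-integral or, more elementarily, by noting that the Gamma-ratio remaining after the $\alpha_1$-sum is benign: writing $a = \tfrac{2\alpha_2+2}{q}$, $b=\tfrac{2\alpha_3+2}{r}$, the surviving coefficient is proportional to $a+b+1$ (from the factor $s$ pulled out above), so the double sum factors as a combination of $\sum(\alpha_2+1)\mu_2^{\alpha_2}$, $\sum(\alpha_3+1)\mu_3^{\alpha_3}$, $\sum(\alpha_2+1)^2\mu_2^{\alpha_2}$, etc., each of which is an elementary closed form like $(1-\mu_2)^{-2}$ or $(1-\mu_2)^{-3}$. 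Collecting these with the right weights $1$, $2/q$, $2/r$ coming from $a+b+1 = \tfrac{2}{q}(\alpha_2+1)+\tfrac{2}{r}(\alpha_3+1)+1$ produces precisely the numerator $qr(1-\mu_2)(1-\mu_3) + 2q(1-\mu_2)(1+\mu_3) + 2r(1+\mu_2)(1-\mu_3)$ over $(1-\mu_2)^3(1-\mu_3)^3$, after clearing the factor $qr$.

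The main obstacle is the bookkeeping in the second paragraph: correctly pulling the $(1-\nu_1)$-powers through the substitution $\nu_j \mapsto \mu_j$ while keeping track of the fact that the kernel is holomorphic in $z$ and anti-holomorphic in $w$, so that the "$(1-\nu_1^2)$" in the denominator arises from a product of a $(1-z_1\overline w_1)$-type factor with its structural counterpart — getting the exponent $2+2/q+2/r$ and the square inside exactly right is where sign/shift errors are most likely. Everything after that reduction is the routine summation of (derivatives of) geometric series, and the final algebraic identity matching the numerator can be checked by expanding both sides as polynomials in $\mu_2,\mu_3$.
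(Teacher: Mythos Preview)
Your approach is essentially identical to the paper's: sum over $\alpha_1$ first via the negative binomial series, absorb the $\alpha_2$- and $\alpha_3$-dependent powers of $(1-\nu_1)$ into the substitutions $\nu_j\mapsto\mu_j$, collapse the surviving Gamma ratio via $\Gamma(a+1)=a\Gamma(a)$ to the linear factor $\tfrac{2}{q}(\alpha_2+1)+\tfrac{2}{r}(\alpha_3+1)+1$, and then sum the resulting geometric-type double series. The paper does exactly this, only with the last step compressed to ``after a little calculations''.

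The one place your write-up goes astray is the discussion of $(1-\nu_1^2)$ versus $(1-\nu_1)$. Your instinct that something is off there is correct, but the resolution is not a ``conjugate-variable normalization'' or a ``structural counterpart'' producing a square: $\nu_1=z_1\overline{w}_1$ is already the right sesquiholomorphic object, and the binomial summation yields exactly $(1-\nu_1)^{-(s+1)}$, nothing more. The appearance of $1-\nu_1^2$ in the stated formula (and in the paper's own proof) is simply a misprint for $1-\nu_1$; note that the definitions of $\mu_2,\mu_3$ already use $(1-\nu_1)^{2/q}$ and $(1-\nu_1)^{2/r}$, which is consistent only with base $1-\nu_1$. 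So drop the attempted rationalization and carry $(1-\nu_1)^{-(2+2/q+2/r)}$ through; the rest of your outline is correct and matches the paper.
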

\begin{proof}

By series representation of the Bergman kernel function, we have

\begin{align*}
K_{D_{q,r}^2} &((z_1, z_2, z_3),(w_1, w_2, w_3)) = \frac{1}{\pi^3} \\&\sum_{\alpha_1, \alpha_2 , \alpha_3 =0 }^{ \infty} \frac{(\alpha_2 + 1) (\alpha_3 +1 ) \Gamma( \frac{2\alpha_2 + 2}{q} +\frac{2\alpha_3 + 2}{r} + \alpha_1 + 2) }{\Gamma(\alpha_1 + 1) \Gamma(\frac{2\alpha_2 + 2}{q} +\frac{2\alpha_3 + 2}{r} + 1)} \nu_1^{\alpha_1} \nu_2^{\alpha_2} \nu_3^{\alpha_3} ,
\end{align*}
where $\nu_1= z_1 \overline{w}_1$, $\nu_2= z_2 \overline{w}_2$, $\nu_3= z_3 \overline{w}_3$. \\
Sum out of $\nu_1$ variable, we have

\begin{align*}
\left(\frac{1}{1-\nu_1^2}\right)^{2+2/q+2/r} \sum_{ \alpha_2 , \alpha_3 =0 }^{ \infty} \frac{(\alpha_2 + 1) (\alpha_3 +1 ) \Gamma( \frac{2\alpha_2 + 2}{q} +\frac{2\alpha_3 + 2}{r}  + 2) }{\pi^3 \Gamma(\frac{2\alpha_2 + 2}{q} +\frac{2\alpha_3 + 2}{r} + 1)} \mu_2^{\alpha_2} \mu_3^{\alpha_3} ,
\end{align*}
where $\mu_2= \frac{\nu_2}{(1-\nu_1)^{2/q}}$, $\mu_3= \frac{\nu_3}{(1-\nu_1)^{2/r}}$.\\
Using the identity $\Gamma(a+1)=a \Gamma(a)$, after a little simplification, we obtain

\begin{align*}
 \sum_{ \alpha_2 , \alpha_3 =0 }^{ \infty} \frac{(\alpha_2 + 1) (\alpha_3 +1 ) \left(\frac{2\alpha_2 + 2}{q} +\frac{2\alpha_3 + 2}{r}  + 1 \right)}{\pi^3 \left(1-\nu_1^2\right)^{2+2/q+2/r} } \mu_2^{\alpha_2} \mu_3^{\alpha_3}
\end{align*}
After a little calculations, we have

\begin{align*}
 \frac{q r (1-\mu_2)(1-\mu_3)+ 2q (1-\mu_2) (1+\mu_3)+2 r (1+\mu_2) (1-\mu_3)}{\pi^3 q r \left(1-\nu_1^2\right)^{2+2/q+2/r}  (1-\mu_2)^3 (1-\mu_3)^3 }.
\end{align*}
\end{proof}

\section{Proof of the main theorem}
Note that the zero set is a bi-holomorphic invariant object. Since any point $(z_1,z_2,z_3)\in D_{q,r}^2$ can be mapped equivalently onto the form $(0,\widetilde{z_2},\widetilde{z_3})$ by following automorphism of the $D_{q,r}^2$  $$ D_{q,r}^2 \ni(z_1,z_2,z_3)\mapsto \left(\frac{z_1-a}{1-\overline{a}z_1}, \frac{(1-|a|^2)^{1/q}}{(1-\overline{a}z_1)^{2/q}}z_2, \frac{(1-|a|^2)^{1/r}}{(1-\overline{a}z_1)^{2/r}}z_3 \right) \in \mathbb{C}^3 .$$  Therefore, we need only consider the zeroes restricted to $\mathbb{D} \times \mathbb{D}$, where $\mathbb{D}:=\{z \in \mathbb{C}:  |z|<1 \}$ . Now by Theorem \ref{BK}
\begin{align*}
K_{D_{q,r}^2} &((0, z_2, z_3),(0, w_2, w_3)) =  \\ &\frac{q r (1-\nu_2)(1-\nu_3)+ 2q (1-\nu_2) (1+\nu_3)+2 r (1+\nu_2) (1-\nu_3)}{ \pi^3 q r   (1-\nu_2)^3 (1-\nu_3)^3 } ,
\end{align*}
where  $\nu_2= z_2 \overline{w}_2$, $\nu_3= z_3 \overline{w}_3$.

Denote by $$F(x,y)=q r (1-x)(1-y)+ 2q (1-x) (1+y)+2 r (1+x) (1-y),$$
then the Bergman kernel $K_{D_{q,r}^2}$ is zero free inside $D_{q,r}^2 \times D_{q,r}^2$ if and only if $F(x,y) \neq 0$ for all $(x,y) \in \mathbb{D} \times \mathbb{D}$. \newline
\indent Let us recall the stability criteria  for a real two-variable polynomial $$h(s,z)=\sum_{j=0}^n \sum_{k=0}^m h_{jk} s^j z^k$$
where $s,z \in \mathbb{C}$ are complex variables, and for some $j,k$ the coefficients $h_{jk}$ are not both zero. Polynomial $h(s, z)$ satisfies the stability property
\begin{equation}\label{sta}
h(s, z)\neq0, \quad (s,z) \in \overline{\mathbb{D}} \times \overline{\mathbb{D}},
\end{equation}
where $\overline{\mathbb{D}}$ is the closure of $\mathbb{D}$. \newline
By following Huang \cite{Hu}, one can show that \ref{sta} is equivalent to
\begin{eqnarray}
h(s,0) \neq 0, \quad \forall s \in \overline{\mathbb{D}} \label{sta1} \\ h(e^{iw},z) \neq 0, \quad \forall z \in \overline{\mathbb{D}}.\label{sta2}
\end{eqnarray}
Condition (\ref{sta1}) means that the new polynomial $f(s)=s^nh(s^{-1},0)$ has all zeros in the open unit circle $\mathbb{D}$, that is, $f(s)$ is $\mathbb{D}$-stable. To test condition (\ref{sta2}), we consider $d(z)=z^m h(e^{iw},z^{-1})$ which we write as a polynomial $$d(z)=\sum_{k=0}^m d_k z^k,$$ with coefficients $d_k=\sum_{j=0}^n h_{j,m-k} s^k,$ and $s=e^{iw}.$ \newline
\indent With the polynomial $d(z)$ we associate the Schur-Cohn $m \times m$ matrix $M = (d_{jk})$ specified by
$$d_{jk}=\sum_{l=1}^j(d_{m-j+l} \overline{d}_{m-k+l} - \overline{d}_{j-l} d_{k-l}), $$
where $j \leq k$. The matrix $M(e^{iw})$ is a Hermitian matrix and we define
$$g(e^{iw})=\det M(e^{iw}),$$ where $g(\cdot)$ is a self-inversive polynomial. \newline
\indent We state the following (see \cite{Siljak})
\begin{thm}\label{warunki}
A two-variable polynomial $h(s, z)$ has the stability property \ref{sta} if and only if
\begin{description}
  \item[(i)] $f(s)$ is $\mathbb{D}$-stable.
  \item[(ii)] $g(z)$ is $\mathbb{T}$-positive.
  \item[(iii)] $M(1)$ is positive definite,
\end{description} where $\mathbb{T}:=\{z \in \mathbb{C} \colon |z|=1\}$.
\end{thm}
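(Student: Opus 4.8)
The plan is to derive the equivalence from three ingredients: the two-stage reduction of Huang already recorded in the excerpt, the classical single-variable Schur--Cohn criterion, and a continuity (signature-counting) argument for the Hermitian family $M(e^{iw})$. Huang's reduction tells us that the stability property \eqref{sta} is equivalent to the conjunction of \eqref{sta1} and \eqref{sta2}; so the whole task reduces to showing that \eqref{sta1} is equivalent to (i) and that \eqref{sta2} is equivalent to the pair (ii)--(iii). The Huang step I would simply invoke from \cite{Hu}, as it is stated above.

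The equivalence \eqref{sta1} $\Leftrightarrow$ (i) is essentially by construction. Writing $f(s)=s^n h(s^{-1},0)=\sum_j h_{j0}s^{n-j}$, a nonzero number $s_0$ is a root of $h(\cdot,0)$ exactly when $s_0^{-1}$ is a root of $f$; therefore $h(s,0)\neq 0$ on $\overline{\mathbb D}$ (equivalently, all roots of $h(\cdot,0)$ lie outside $\overline{\mathbb D}$) holds if and only if all roots of $f$ lie in $\mathbb D$, which is the $\mathbb D$-stability of $f$.

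For \eqref{sta2} I would first pass, for each fixed $w$, from $h(e^{iw},\cdot)$ to $d(z)=z^m h(e^{iw},z^{-1})$: the roots of $d$ are the reciprocals of the nonzero roots of $h(e^{iw},\cdot)$, so \eqref{sta2} is equivalent to requiring that $d(z)$ be $\mathbb D$-stable for every $w$. Here it is important to note that condition (i), equivalently \eqref{sta1}, guarantees that the leading coefficient $d_m(e^{iw})=\sum_j h_{j0}e^{ijw}=h(e^{iw},0)$ is nonzero for every $w$, so $d$ keeps full degree $m$ throughout and the $m\times m$ Schur--Cohn matrix is the correct object. The single-variable Schur--Cohn test, which I would invoke from \cite{Siljak}, then states that $d(z)$ is $\mathbb D$-stable if and only if its Hermitian matrix $M(e^{iw})$ is positive definite.

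Finally I would convert the uniform condition ``$M(e^{iw})\succ 0$ for all $w$'' into the two finite tests (ii) and (iii). One direction is immediate, since positive definiteness yields $\det M(e^{iw})=g(e^{iw})>0$ (the $\mathbb T$-positivity of the self-inversive $g$) and in particular $M(1)\succ 0$. For the converse, which is the decisive step, I would use that $w\mapsto M(e^{iw})$ is a continuous closed loop of Hermitian matrices whose number of negative eigenvalues can change only when an eigenvalue crosses $0$, i.e.\ only where $\det M=g$ vanishes; condition (ii) forbids this, so the signature is constant around the loop, and condition (iii) fixes that constant value to zero negative eigenvalues at $w=0$. Hence $M(e^{iw})\succ 0$ for every $w$, which gives \eqref{sta2}, and the theorem follows by assembling (i) $\Leftrightarrow$ \eqref{sta1} with (ii)+(iii) $\Leftrightarrow$ \eqref{sta2} through Huang's reduction. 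I expect the Schur--Cohn step to be the main obstacle: one must invoke it in the precise form that equates $\mathbb D$-stability with positive definiteness of $M$, and keep careful track of the degree of $d$ so that the matrix size is stable across the loop; the signature argument is then the clean topological link that reduces infinitely many definiteness conditions to the single determinant inequality (ii) together with the one-point check (iii).
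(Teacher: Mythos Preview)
The paper does not supply its own proof of this theorem: it is quoted verbatim from \cite{Siljak} with the parenthetical ``see \cite{Siljak}'' and is used as a black box in the subsequent arguments. So there is nothing in the paper to compare your proposal against.

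That said, your outline is a faithful reconstruction of the standard proof one finds in the two-variable stability literature (Huang, \v{S}iljak--Stipanovi\'c). The three moving parts you identify --- Huang's reduction \eqref{sta}\,$\Leftrightarrow$\,\eqref{sta1}$\wedge$\eqref{sta2}, the reciprocal-polynomial bijection of roots giving \eqref{sta1}\,$\Leftrightarrow$\,(i), and the one-variable Schur--Cohn test applied uniformly in $w$ together with the signature-continuity argument giving \eqref{sta2}\,$\Leftrightarrow$\,(ii)$\wedge$(iii) --- are exactly the ingredients in \cite{Siljak}. Your care in using (i) to keep $d_m(e^{iw})=h(e^{iw},0)\neq 0$, so that $d$ retains degree $m$ and the $m\times m$ Schur--Cohn matrix is well-defined along the whole loop, is the one genuinely delicate point, and you handle it correctly. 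The only remark I would add is that ``$\mathbb T$-positive'' indeed means $g(e^{iw})>0$ (real and strictly positive) for all $w$, which is what makes the eigenvalue-crossing argument go through; strict positivity of $\det M$ both forbids zero eigenvalues and, together with the one-point check (iii), pins the constant signature to $(m,0)$.
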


It is easy observation, that $D_{q,r}^2$ is Lu Qi-Keng domain if and only if polynomial $F(\varepsilon x,  \varepsilon y)$ satisfies the stability property \ref{sta} for every $\varepsilon \in (0,1)$. \newline Let $\varepsilon \in (0,1)$, then
\begin{align*}
    F(\varepsilon x,  \varepsilon y)=& (\varepsilon^2 q r-2 \varepsilon^2 q-2 \varepsilon^2 r)xy + q r+2 q+2 r  \\
   & + (-\varepsilon q r-2 \varepsilon q+2 \varepsilon r)x + (-\varepsilon q r+2 \varepsilon q-2 \varepsilon r)y
\end{align*}
Now We will consider conditions (i), (ii) and (iii) from Theorem \ref{warunki} for  polynomial $F(\varepsilon x,  \varepsilon y)$, where $x, y \in \mathbb{C}$ are complex variables. \newline
\indent Condition (i) means, that the polynomial $$f(s)=(q r+2 q+2 r)s +2 \varepsilon r -\varepsilon q r-2 \varepsilon q $$ has all zeros in the open unit circle $\mathbb{D}$, which is equivalent, to state that the following inequalities
$$-1< \frac{\varepsilon q r+2 \varepsilon q- 2 \varepsilon r}{q r+2 q+2 r} < 1,$$ holds for every $q>0, r>0$. Simple calculations show that, these inequalities holds for any positive numbers $p$ and $r$. \newline \indent  In our case condition (iii) is included in condtion (ii), so we need only consider condition (ii). To test condition (ii), we consider polynomial
\begin{align*}
   g(z)=A^2 + B^2 -C^2 -D^2 + (A\cdot B - C \cdot D) (z+\overline{z}),
\end{align*} where $A=q r+2 q+2 r$, $B=-\varepsilon q r-2 \varepsilon q+2 \varepsilon r$, $C=-\varepsilon q r+2 \varepsilon q-2 \varepsilon r$, $D=\varepsilon^2 q r-2 \varepsilon^2 q-2 \varepsilon^2 r$. \newline
\indent Positivity of $g(z)$ on $\mathbb{T}$ (which is required by condition (ii) ) means, that following inequality
\begin{align}\label{n}
  (A\cdot B - C \cdot D)x>  C^2 + D^2- A^2 - B^2
\end{align} holds for every $q>0, r>0$ and $-1\leq x \leq1$. Easy calculation shows that inequality \ref{n} for every $q>0, r>0$ is true in cases when $x=-2$ or $x=2$. Which implies, that \ref{n} is true for every $q>0, r>0$ and $-1\leq x \leq1$. This completes the proof of the main theorem.
\section{Additional results}
Now We will consider following domains
$$D_{q,r}^1:= \{z \in \mathbb{C}^3 \colon |z_1| + |z_2|^q < 1, \quad |z_1| + |z_3|^r < 1\}.$$
Similarly as in section 3, we have

\begin{prop}\label{pr2} Let $\alpha_i \in \mathbb{Z}_{+}$ for $i=1,2,3$. Then, we have
$${\left\| z_1^{\alpha_1} z_2^{\alpha_2} z_3^{\alpha_3} \right\|}^2_{L^2(D_{q,r}^1)}= \frac{  \pi^3 \Gamma(2 \alpha_1 + 2) \Gamma(\frac{2\alpha_2 + 2}{q} +\frac{2\alpha_3 + 2}{r} + 1)  }{  (\alpha_2 + 1) (\alpha_3 +1 ) \Gamma( \frac{2\alpha_2 + 2}{q} +\frac{2\alpha_3 + 2}{r} + 2\alpha_1 + 3)} .$$
\end{prop}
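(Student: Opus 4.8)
The plan is to repeat, almost verbatim, the computation behind Proposition~\ref{pr1}; the only structural change is that $|z_1|$, rather than $|z_1|^2$, enters the defining inequalities of $D_{q,r}^1$. First I would write
\[
\|z_1^{\alpha_1}z_2^{\alpha_2}z_3^{\alpha_3}\|^2_{L^2(D_{q,r}^1)}=\int_{D_{q,r}^1}|z_1|^{2\alpha_1}|z_2|^{2\alpha_2}|z_3|^{2\alpha_3}\,dV(z),
\]
introduce polar coordinates $z_j=r_je^{i\theta_j}$ in each variable, and integrate out the three angular variables, which contributes a factor $(2\pi)^3$. The remaining real integral runs over $\{(r_1,r_2,r_3):\ r_j\ge0,\ r_1+r_2^q<1,\ r_1+r_3^r<1\}$, i.e.\ over $r_1\in[0,1)$, $r_2\in[0,(1-r_1)^{1/q})$, $r_3\in[0,(1-r_1)^{1/r})$. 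This is precisely the place where $D_{q,r}^1$ departs from $D_{q,r}^2$: the upper radial limits are now powers of $(1-r_1)$ instead of $(1-r_1^2)$.

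Next I would carry out the two elementary power integrals in $r_2$ and $r_3$, turning the expression into
\[
\frac{(2\pi)^3}{(2\alpha_2+2)(2\alpha_3+2)}\int_0^1 r_1^{2\alpha_1+1}(1-r_1)^{\frac{2\alpha_2+2}{q}+\frac{2\alpha_3+2}{r}}\,dr_1,
\]
and then apply the same identity $\int_0^1 x^a(1-x^p)^b\,dx=\frac{\Gamma((a+1)/p)\Gamma(b+1)}{p\,\Gamma((a+1)/p+b+1)}$ that was used in Proposition~\ref{pr1}, but now with $p=1$, $a=2\alpha_1+1$ and $b=\frac{2\alpha_2+2}{q}+\frac{2\alpha_3+2}{r}$. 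Because $p=1$, the quantity $(a+1)/p$ equals $2\alpha_1+2$ (rather than $\alpha_1+1$, as when $p=2$), so $\Gamma(\alpha_1+1)$ is replaced by $\Gamma(2\alpha_1+2)$ and the Gamma in the denominator becomes $\Gamma\big(\tfrac{2\alpha_2+2}{q}+\tfrac{2\alpha_3+2}{r}+2\alpha_1+3\big)$. Collecting the numerical prefactor and combining with the Beta value then gives the asserted formula.

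I do not expect a genuine obstacle here: the argument is a single change of variables followed by one Beta integral, structurally identical to the proof of Proposition~\ref{pr1}. The only points that need care are recording that the radial bounds now involve $(1-r_1)$ and not $(1-r_1^2)$ — which is exactly what forces the exponent parameter $p=1$ in the final identity, and hence the replacement of $\Gamma(\alpha_1+1)$ by $\Gamma(2\alpha_1+2)$ together with the extra shift in the denominator — and the bookkeeping of the prefactor $(2\pi)^3/\big[(2\alpha_2+2)(2\alpha_3+2)\big]$.
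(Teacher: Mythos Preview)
Your proposal is correct and is exactly the approach the paper takes: the paper simply writes ``Similarly as in section~3'' and states the formula, and your outline reproduces that computation with the single change $(1-r_1^2)\rightsquigarrow(1-r_1)$, applying the Beta-type identity with $p=1$ instead of $p=2$. The only thing to watch in the final bookkeeping is that with $p=1$ there is no extra factor $1/p=1/2$ from the radial integral, so the prefactor you collect is $2\pi^3/[(\alpha_2+1)(\alpha_3+1)]$; apart from this constant the argument goes through verbatim.
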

By series representation of the Bergman kernel function, we have
\begin{align*}
K_{D_{q,r}^1} &((0, z_2, z_3),(0, w_2, w_3)) =\\& \frac{1}{2\pi^3} \sum_{\alpha_2 , \alpha_3 =0 }^{ \infty} \frac{(\alpha_2 + 1) (\alpha_3 +1 ) \Gamma( \frac{2\alpha_2 + 2}{q} +\frac{2\alpha_3 + 2}{r} + 3) }{\Gamma(2) \Gamma(\frac{2\alpha_2 + 2}{q} +\frac{2\alpha_3 + 2}{r} + 1)}  \nu_2^{\alpha_2} \nu_3^{\alpha_3} ,
\end{align*}
where  $\nu_2= z_2 \overline{w}_2$, $\nu_3= z_3 \overline{w}_3$.

Using the identity $\Gamma(a+1)=a \Gamma(a)$, after a little calculation, we obtain
\begin{align*}
K_{D_{q,r}^1} &((0, z_2, z_3),(0, w_2, w_3)) =  \frac{2 r^2 (\nu_2 (\nu_2+4)+1) (1-\nu_3)^2}{\pi^3 q^2 r^2 (1-\nu_2)^4 (1-\nu_3)^4} \\+& \frac{q^2 (1-\nu_2)^2 \left(-2 r^2 \nu_3 + (r-2) (r-1) \nu_3^2+(r+3) r+8 \nu_3+2\right)}{\pi^3 q^2 r^2 (1-\nu_2)^4 (1-\nu_3)^4} \\-& \frac{q r \left(1-\nu_2^2\right) (1-\nu_3) (3 r (\nu_3-1)-4 (\nu_3+1))}{\pi^3 q^2 r^2 (1-\nu_2)^4 (1-\nu_3)^4} ,
\end{align*}
where  $\nu_2= z_2 \overline{w}_2$, $\nu_3= z_3 \overline{w}_3$. \newline
Denote by
\begin{align*}
   G(x,y)=&q^2 (x-1)^2 \left(-2 r^2 y+(r-2) (r-1) y^2+(r+3) r+8 y+2\right)\\
 &- q r \left(x^2-1\right) (y-1) (3 r (y-1)-4 (y+1))\\&+2 r^2 (x (x+4)+1) (y-1)^2,
\end{align*}
then the Bergman kernel $K_{D_{q,r}^1}$ has zero inside $D_{q,r}^1 \times D_{q,r}^1$ if polynomial $G(\varepsilon x, \varepsilon y)$ does not satisfy the stability property \ref{sta} for some $0<\varepsilon<1$. \newline
\indent Now We will consider conditions (i) from Theorem \ref{warunki} for  polynomial $G(\varepsilon x,  \varepsilon y)$ in case when $q=r$.  If $q=r$, then we have
\begin{align*}
   G(\varepsilon x, \varepsilon y)=& A_{22} x^2 y^2 + A_{21} x^2y +A_{20} x^2  + A_{12} xy^2 \\&  +  A_{11}xy + A_{10} x + A_{02} y^2+ A_{01} y + A_{00} ,
\end{align*}
where $A_{22}=\varepsilon^4 (8 - 6 r + r^2)$, $A_{21}=\varepsilon^3 (4 + 6 r - 2 r^2)$, $A_{20}=\varepsilon^2 r^2$, $A_{12}=A_{21}$, $A_{11}=\varepsilon^2 4 (r^2-32)$, $A_{10}=\varepsilon (-2 r^2-6 r+4)$, $A_{20}=A_{02}$, $A_{10}=A_{01}$, and  $A_{00}=r^2+6 r+8$. \newline
\indent  Condition (i) means, that the polynomial $$f(s)=(r^2+6 r+8)s^2 + \varepsilon (-2 r^2-6 r+4)s + \varepsilon^2 r^2$$ has all zeros in the open unit circle $\mathbb{D}$, which is equivalent, to state that the  transformed polynomial $$Q(s)=(s-1)^2 f\left(\frac{s+1}{s-1}\right)$$ is Hurwitz stable  and $f(1)\neq 0$. It is easy observation, that $$f(1)=(1-\varepsilon)^2 r^2 + 6(1-\varepsilon)r +8 +4 \varepsilon >0$$ if $0<\varepsilon<1$ and $r>0$. Polynomial $Q(s)$ is  Hurwitz stable if and only if (see \cite{ZY} for details) $$\frac{2(1-\varepsilon^2)r^2+12r+16}{(1-\varepsilon)^2 r^2 + 6(1-\varepsilon)r +8 +4 \varepsilon} >0$$
and $$\frac{(1+\varepsilon)^2 r^2 + 6(1+\varepsilon)r +8 -4 \varepsilon}{(1-\varepsilon)^2 r^2 + 6(1-\varepsilon)r +8 +4 \varepsilon}>0$$ which is true when $0<\varepsilon<1$ and $r>0$. Hence $f(s)$ is $\mathbb{D}$-stable. \newline
To test condition (ii), we consider polynomial
\begin{align*}
   d(z)=&(A_{00} + A_{10}t + A_{20}t^2 )z^2 + (A_{01} + A_{11}t + A_{21}t^2) z \\&+ A_{02} + A_{12}t + A_{22}t^2,
\end{align*} where $t=e^{iw}$.
With the polynomial $d(z)$ we associate the Schur-Cohn $2 \times 2$ matrix
$$M(t)=\left[
  \begin{array}{cc}
    d_2 \overline{d}_2 - d_0 \overline{d}_0  &  d_2 \overline{d}_1 -  \overline{d}_0 d_1  \\
    \overline{d}_2 d_1 -  d_0 \overline{d}_1 & d_2 \overline{d}_2 - d_0 \overline{d}_0  \\
  \end{array}
\right],$$ \indent where $d_2=A_{00} + A_{10}t + A_{20}t^2$ , $d_1=A_{01} + A_{11}t + A_{21}t^2$ and $d_0=A_{02} + A_{12}t + A_{22}t^2$, $t=e^{iw}$. Now we define $g(t)=\det M(t)$. After some calculation for $\varepsilon=1$ (with the help of a computer program Maple or Mathematica), we have
$$g(e^{iw})=27648 r^{10} (\eta-1)^3 \left(r^2 (\eta-1)+4\right),$$  where $\eta=\cos w$. It is easy to see that for every $r>0$ there exist $\eta < 1$ such that $g(e^{iw})<0$. Hence there exist $1>\varepsilon >0$, such that polynomial $G(\varepsilon x, \varepsilon y)$ does not satisfy the stability property \ref{sta}. Therefore the Bergman kernel function for $K_{D_{r,r}^1}$ is not zero free. As a consequence of above consideration, we have following proposition
\begin{prop}\label{pro3}
For any $r>0$, domain $D_{r,r}^1$ defined by
$$D_{r,r}^1:= \{z=(z_1,z_2,z_3) \in \mathbb{C}^3 \colon |z_1| + |z_2|^r < 1, \quad |z_1| + |z_3|^r < 1\}$$ is not Lu Qi-Keng.
\end{prop}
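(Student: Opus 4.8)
\medskip
\noindent\textbf{Proof plan.}
The whole argument is essentially contained in the discussion preceding the statement; the plan is to assemble it cleanly. Specializing the kernel formula above to $q=r$ gives, with $\nu_2=z_2\overline w_2$ and $\nu_3=z_3\overline w_3$,
$$K_{D_{r,r}^1}\bigl((0,z_2,z_3),(0,w_2,w_3)\bigr)=\frac{G(\nu_2,\nu_3)}{\pi^3 r^4(1-\nu_2)^4(1-\nu_3)^4},$$
whose denominator is nonzero for $\nu_2,\nu_3\in\mathbb{D}$. Every pair $(\nu_2,\nu_3)\in\mathbb{D}\times\mathbb{D}$ equals $(z_2\overline w_2,z_3\overline w_3)$ for some $z_2,w_2,z_3,w_3\in\mathbb{D}$ (take $z_2=|\nu_2|^{1/2}e^{i\arg\nu_2}$, $w_2=|\nu_2|^{1/2}$, and similarly for $\nu_3$), and then $(0,z_2,z_3),(0,w_2,w_3)\in D_{r,r}^1$; so it suffices to find one zero of $G$ inside $\mathbb{D}\times\mathbb{D}$. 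Since $\mathbb{D}\times\mathbb{D}=\bigcup_{0<\varepsilon<1}\overline{\varepsilon\mathbb{D}}\times\overline{\varepsilon\mathbb{D}}$, this amounts to exhibiting one $\varepsilon\in(0,1)$ for which $G(\varepsilon x,\varepsilon y)$ fails the stability property \ref{sta}; and by Theorem \ref{warunki} it is enough to break any one of (i), (ii), (iii) at that $\varepsilon$.

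I would break condition (ii). With $q=r$ the coefficients $A_{jk}$ of $G(\varepsilon x,\varepsilon y)$ are the polynomials in $\varepsilon$ listed above (with $A_{12}=A_{21}$, $A_{02}=A_{20}$, $A_{01}=A_{10}$); from them form $d(z)$, the $2\times2$ Schur--Cohn matrix $M(t)$ with $t=e^{iw}$, and the self-inversive polynomial $g(t)=\det M(t)$. The key step is the symbolic evaluation of $g$ at $\varepsilon=1$: expanding $d_0,d_1,d_2$ as quadratics in $t$, forming the Hermitian entries, taking the determinant and reducing with $t\overline t=1$ and $t+\overline t=2\cos w$, one obtains
$$g(e^{iw})=27648\,r^{10}(\eta-1)^3\bigl(r^2(\eta-1)+4\bigr),\qquad \eta=\cos w.$$
On the nonempty interval $\eta\in\bigl(\max\{-1,\,1-4/r^2\},\,1\bigr)$ one has $(\eta-1)^3<0$ and $r^2(\eta-1)+4>0$, so $g(e^{iw})<0$ there; hence $g$ is not $\mathbb{T}$-positive at $\varepsilon=1$.

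It remains to push this slightly inside $(0,1)$. Since the $A_{jk}$, and therefore $g=g_\varepsilon(e^{iw})$, depend polynomially on $\varepsilon$, fixing $w_0$ with $g_1(e^{iw_0})<0$, there is $\delta>0$ such that $g_\varepsilon(e^{iw_0})<0$ for all $\varepsilon\in(1-\delta,1)$. For any such $\varepsilon$, condition (ii) of Theorem \ref{warunki} fails, so $G(\varepsilon x,\varepsilon y)$ does not satisfy \ref{sta}, hence $K_{D_{r,r}^1}$ has a zero in $D_{r,r}^1\times D_{r,r}^1$, i.e. $D_{r,r}^1$ is not Lu Qi-Keng. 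One may additionally note, for context, that condition (i) holds for every $\varepsilon\in(0,1)$: the quadratic $f(s)=(r^2+6r+8)s^2+\varepsilon(-2r^2-6r+4)s+\varepsilon^2 r^2$ is $\mathbb{D}$-stable, as $f(1)=(1-\varepsilon)^2r^2+6(1-\varepsilon)r+8+4\varepsilon>0$ and the bilinear transform $Q(s)=(s-1)^2f\bigl(\tfrac{s+1}{s-1}\bigr)$ is Hurwitz with both Routh ratios positive, so the obstruction truly lies in (ii). The main obstacle will be the determinant computation producing the clean factored form of $g$: it is finite but bulky and best handled by computer algebra, with care that the $A_{jk}$ are used exactly as listed; the continuity step is routine and could instead be replaced by a single explicit choice of $(\varepsilon,w)$ with $g_\varepsilon(e^{iw})<0$.
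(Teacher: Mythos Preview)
Your proposal is correct and follows essentially the same route as the paper: restrict the kernel to $z_1=w_1=0$, reduce to failure of the stability property \ref{sta} for $G(\varepsilon x,\varepsilon y)$, compute the Schur--Cohn determinant $g$ at $\varepsilon=1$ to get the factored form $27648\,r^{10}(\eta-1)^3(r^2(\eta-1)+4)$, observe it is negative for some $\eta<1$, and then pass to a nearby $\varepsilon\in(0,1)$ by continuity. You are simply more explicit than the paper about the realization of $(\nu_2,\nu_3)$ and the perturbation step, but the argument is the same.
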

\section{Some remarks}
 As stated in Introduction, Bergman kernel for $\Omega^{(p,1/2)}_{1,1}= \{(w, z) \in  \mathbb{C} \times \mathbb{C} \colon |w|^{2p} + |z| < 1\}$ has zeroes if and only if $1/p > 2$. Moreover it is well known fact, that the Thullen domain $\Omega^{(p,1)}_{1,1} = \{(w, z) \in  \mathbb{C} \times \mathbb{C} \colon |w|^{2p} + |z|^2 < 1\}$ is a Lu Qi-Keng domain for $p > 0$. \newline
\indent In view of the Proposition \ref{pro3} and Theorem \ref{MT}, we can ask the following question: Is there a relationship between the existence of zeros of  the Bergman kernel function for  domains
\begin{align*}
   \{z \in \mathbb{C}^3 \colon |z_1|^p + |z_2|^q < 1, \quad |z_1|^p + |z_3|^q < 1 \}\\
 \text{and} \quad \{z \in \mathbb{C}^2 \colon |z_1|^p + |z_2|^q < 1\} \,?
\end{align*}
 Through Proposition \ref{pro3} we know, that existence of zeros of  the Bergman kernel function for $\{z \in \mathbb{C}^3 \colon |z_1|^p + |z_2|^q < 1, \quad |z_1|^p + |z_3|^q < 1 \}$ does not imply existence of zeros of  the Bergman kernel function for $\{z \in \mathbb{C}^2 \colon |z_1|^p + |z_2|^q < 1\}$ in general. It is interesting question whether the converse is true?

\small{

\bibliographystyle{amsplain}
}
\noindent Tomasz Beberok\\
Department of Applied Mathematics\\
University of Agriculture in Krakow\\
ul. Balicka 253c, 30-198 Krakow, Poland\\
email: tbeberok@ar.krakow.pl
\end{document}